\newtheorem*{corollary*}{Corollary}
\newtheorem*{conjecture*}{Conjecture}
\newtheorem*{example*}{Example}
\newtheorem*{theorem*}{Theorem}
\newtheorem*{proposition*}{Proposition}
\newtheorem{theorem}{Theorem}[section]
\newtheorem{lemma}[theorem]{Lemma}
\newtheorem*{claim*}{Claim}
\newtheorem*{conjecture}{Conjecture}
\newtheorem*{theorem2}{Theorem}
\theoremstyle{definition}
\theoremstyle{remark}
\numberwithin{equation}{section}
\renewcommand*\env@matrix[1][\
arraystretch]{%
  \edef\arraystretch{#1}%
  \hskip -\arraycolsep
  \let\@ifnextchar\new@ifnextchar
  \array{*\c@MaxMatrixCols c}}
\begin{document}

\title{On total stability conditions for Dynkin quivers}
\date{\today}

\subjclass[2010]{Primary 16G10, 16E10}

\keywords{slope function, Dynkin type quivers, total stability condition}

\author{Ren\'{e} Marczinzik}
\address{Mathematical Institute of the University of Bonn, Endenicher Allee 60, 53115 Bonn, Germany}
\email{marczire@math.uni-bonn.de}

\begin{abstract}
We show that for a Dynkin quiver $Q$ of type $E_7$ with a specific orientation, the path algebra $KQ$ has no slope function of the form $\mu=\frac{\theta}{\dim}$ that defines a total stability condition. This gives a counterexample to a conjecture of Reineke.
\end{abstract}

\maketitle
\section*{Introduction}
Let $A$ be a finite dimensional algebra over a field $K$ with Grothendieck group $K_0(A)$.
A \emph{slope function for $A$}, $\mu: K_0(A) \setminus \{0\} \rightarrow \mathbb{R}$ is defined as the quotient $\mu=\frac{\theta}{\kappa}$, where $\theta: K_0(A) \rightarrow \mathbb{R}$ and $\kappa: K_0(A) \rightarrow \mathbb{R}$ are linear functions on $K_0(A)$ such that $\kappa(M)>0$ for all $0 \neq M \in K_0(A)$.
One of the most important classes of slope functions are those of the form $\mu=\frac{\theta}{\dim}$, where $\dim$ associates to $M$ its vector space dimension over $K$.
A non-zero $A$-module $M$ is called \emph{$\mu$-stable} if $\mu(N)<\mu(M)$ for all non-zero proper submodules $N$ of $M$.
Slope functions are closely related to stability condition and stability structures on the module category of $A$ in the sense of King and Rudakov, see \cite{K} and \cite{Rud}. In recent years stability conditions on abelian and triangulated categories have appeared in many contexts and have interactions with fields such as algebraic geometry, topology and mathematical physics, we refer for example to \cite{B} and \cite{Q} for more on stability conditions and related topics.
A slope function is said to define a \emph{total stability condition} if every indecomposable $A$-module is $\mu$-stable.

One of the most important classes of finite dimensional algebras are path algebras $KQ$ for a finite quiver $Q$. The fundamental theorem of Gabriel states that $KQ$ is of finite representation type if and only if $Q$ is of Dynkin type.
In \cite{Rei}, Reineke posed the following as conjecture 7.1: 
\begin{conjecture}
Let $A=KQ$ be a path algebra of Dynkin type.
Then there exists a slope function that defines a total stability condition of the form $\mu=\frac{\theta}{\dim}$ for $A$.
\end{conjecture}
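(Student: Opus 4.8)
The strategy is to refute this conjecture for a suitable orientation $Q$ of $E_7$. The plan is to reformulate ``total stability'' as feasibility of a finite system of strict linear inequalities in the coefficients of $\theta$, and then to exhibit an explicit certificate that this system has no solution.

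\emph{Linearization.} Fix the orientation. By Gabriel's theorem $KQ$ has finitely many indecomposables, indexed by the positive roots of $E_7$, with explicit dimension vectors $\underline d\in\mathbb Z_{\ge 0}^{Q_0}$. A linear functional $\theta$ on $K_0(KQ)\cong\mathbb Z^{Q_0}$ is determined by its values $\theta_v=\theta(S_v)$ on the simples, and for any module $X$ one has $\theta(X)=\sum_v\theta_v(\underline d_X)_v$ and $\dim(X)=\sum_v(\underline d_X)_v>0$. Hence for modules $N,M$ the slope inequality $\mu(N)<\mu(M)$ is equivalent to the homogeneous \emph{linear} inequality
\[
\bigl\langle\theta,\ \dim(M)\,\underline d_N-\dim(N)\,\underline d_M\bigr\rangle<0
\]
in the unknowns $(\theta_v)_{v\in Q_0}$. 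Thus $\mu=\theta/\dim$ defines a total stability condition for some $\theta$ if and only if a certain finite system $\mathcal S$ of homogeneous strict linear inequalities over $\mathbb Q$ is feasible. I would also trim $\mathcal S$: it suffices to impose $\mu(N)<\mu(M)$ only for \emph{indecomposable} proper submodules $N\subsetneq M$, because a decomposable submodule $N=\bigoplus_jN_j\subsetneq M$ has each $N_j$ an indecomposable proper submodule of $M$ and $\mu(N)$ is a weighted mediant of the $\mu(N_j)$. The pairs $(M,N)$ indexing $\mathcal S$ can be read off the Auslander--Reiten quiver, or from standard matrix representatives of the roots of $E_7$.

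\emph{Certificate of infeasibility.} By Gordan's theorem (a variant of Farkas' lemma), a homogeneous strict system $\{\langle\theta,a_k\rangle<0\}_k$ is infeasible precisely when there are rationals $c_k\ge 0$, not all zero, with $\sum_kc_ka_k=0$ in $\mathbb R^{Q_0}$; pairing this relation with $\theta$ then gives $0=\sum_kc_k\langle\theta,a_k\rangle<0$, absurd. So the core of the proof is to display, for one well-chosen orientation of $E_7$, a short list of indecomposables $M$ with proper indecomposable submodules $N\subsetneq M$ and positive weights $c_{M,N}$ for which
\[
\sum_{(M,N)}c_{M,N}\bigl(\dim(M)\,\underline d_N-\dim(N)\,\underline d_M\bigr)=0;
\]
equivalently, a ``cyclic'' configuration of submodule inclusions among indecomposables along which the slope constraints cannot hold at once. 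I would search for it among the large indecomposables supported around the triple point of $E_7$, where several roots of equal total dimension sit in an inclusion pattern --- the sort of flexibility in dimension vectors that types $A_n$ and $D_n$ lack.

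\emph{Conclusion and main obstacle.} Such a certificate shows that no $\theta$ makes every indecomposable $\mu$-stable, hence $KQ$ has no slope function $\theta/\dim$ defining a total stability condition, contradicting the conjecture; and since the required inclusions of indecomposable $E_7$-representations are defined over the prime field, the statement is uniform in $K$. The main difficulty is the middle step: determining the submodule lattices of the indecomposable $E_7$-representations for the chosen orientation, and then locating the small contradictory subsystem among the many inequalities of $\mathcal S$ --- a finite but genuinely delicate search, which in particular dictates the choice of orientation.
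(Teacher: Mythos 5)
Your strategy is precisely the one the paper follows: fix a carefully chosen orientation of $E_7$, clear denominators to turn each stability requirement $\mu(N)<\mu(M)$ into a homogeneous strict linear inequality $0<\langle\theta,\dim(N)\,\underline d_M-\dim(M)\,\underline d_N\rangle$ in the coefficients of $\theta$, and then kill feasibility by a Gordan/Farkas certificate, i.e.\ nonnegative weights making the left-hand sides sum to zero. The paper does exactly this with five inclusions of indecomposables (for the orientation with arrows $2\to1$, $3\to2$, $4\to3$, $5\to4$, $6\to5$, $3\to7$), namely $[0,0,1,1,1,0,1]\subset[1,1,2,2,2,1,1]$, $[0,1,1,1,0,0,0]\subset[0,1,2,2,1,0,1]$, $[0,0,1,0,0,0,0]\subset[0,0,1,1,0,0,0]$, $[0,1,1,0,0,0,0]\subset[0,1,2,1,0,0,1]$, $[1,1,1,1,1,1,1]\subset[1,2,2,1,1,1,1]$, and the certificate weights $(\tfrac12,1,2,1,1)$. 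One labor-saving device you miss: rather than computing submodule lattices, the paper obtains each inclusion as an irreducible morphism read off an almost split sequence, using that an irreducible map between indecomposables is either injective or surjective, so comparing dimension vectors already certifies that it is a proper inclusion.

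The genuine gap is that your proposal stops exactly where the proof begins: you never exhibit the orientation, the list of inclusion pairs $(N,M)$, or the weights $c_{M,N}$, and you explicitly defer this as ``a finite but genuinely delicate search.'' Since the conjecture is true for every Dynkin quiver with at most six vertices and for most orientations of $E_7$, the existence of a contradictory subsystem is not something one can wave at --- it is the entire mathematical content of the refutation, and without the explicit data (or at least a verifiable description of it) there is no proof. Everything else in your write-up --- the linearization, the reduction to indecomposable submodules via the mediant inequality, the uniformity in $K$ --- is correct but is the routine part.
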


I made a program in the GAP-package \cite{QPA} that translates the problem of finding a slope function that defines a total stability condition for a given Dynkin quiver $Q$ into an elementary problem about inequalities. It turned out that the conjecture is true for all $Q$ of Dynkin type having at most six points. But surprisingly a counterexample was found for a specific $Q$ of Dynkin type $E_7$ with the following orientation:
$Q=$
\[\begin{tikzcd}
	&& 7 \\
	1 & 2 & 3 & 4 & 5 & 6
	\arrow["{a_1}", from=2-2, to=2-1]
	\arrow["{a_2}", from=2-3, to=2-2]
	\arrow["{a_3}", from=2-4, to=2-3]
	\arrow["{a_4}", from=2-5, to=2-4]
	\arrow["{a_5}", from=2-6, to=2-5]
	\arrow["{a_6}", from=2-3, to=1-3]
\end{tikzcd}\]
The main result of this article is the following counterexample to the conjecture of Reineke.
\begin{theorem2}
Let $Q$ be the quiver of Dynkin type $E_7$ as above and $A=KQ$ the path algebra of $Q$. 
Then $A$ has no slope function of the form $\mu=\frac{\theta}{\dim}$ that defines a total stability condition.

\end{theorem2}

We present an elementary proof that can be verified by hand.
We only assume that the reader is familiar with Auslander-Reiten theory and the classification of indecomposable modules for path algebra $KQ$ of Dynkin type, where we refer for example to the textbook \cite{ARS} for an elementary introduction.

\section{A counterexample to the conjecture of Reineke}
Let $Q$ be the quiver of Dynkin type $E_7$ with the orientation as above and $A=KQ$ its path algebra for the rest of this section. We use right modules unless stated otherwise.
We identity indecomposable modules with their dimension vectors since each indecomposable module is uniquely determined by its dimension vector for path algebras of Dynkin type.
We leave the proof of the following lemma to the reader as it is an easy consequence of the known classification of indecomposable modules for $KQ$ and the shape of the Auslander-Reiten quiver of $KQ$. For the convinience of the reader we show how to verify the lemma using the GAP-package \cite{QPA} in the next section as an appendix.
\begin{lemma} \label{lemma}
There exist the following five inclusions between indecomposable modules in $A$:
\begin{enumerate}
\item $f_1: [0,0,1,1,1,0,1] \rightarrow [1,1,2,2,2,1,1]$
\item $f_2: [0,1,1,1,0,0,0] \rightarrow [0,1,2,2,1,0,1]$
\item $f_3: [0,0,1,0,0,0,0] \rightarrow [0,0,1,1,0,0,0]$
\item $f_4: [0,1,1,0,0,0,0] \rightarrow [0,1,2,1,0,0,1]$
\item $f_5: [1,1,1,1,1,1,1] \rightarrow [1,2,2,1,1,1,1]$

\end{enumerate}
In all five cases, the inclusion is an irreducible morphism.
\end{lemma}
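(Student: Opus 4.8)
The plan is to exploit one clean structural fact and then reduce everything to locating five arrows in the Auslander--Reiten quiver of $A$. The fact is that \emph{every irreducible morphism between indecomposable modules is either a monomorphism or an epimorphism}: if $f\colon M \to N$ is irreducible, factor it as $M \xrightarrow{p} \operatorname{im} f \xrightarrow{i} N$ with $p$ epi and $i$ mono; irreducibility forces $p$ to be a split monomorphism or $i$ to be a split epimorphism, and in the first case $p$ is an isomorphism (so $f$ is mono), while in the second $i$ is an isomorphism (so $f$ is epi). In each of the five listed pairs the source has strictly smaller dimension than the target at some vertex --- at vertex $1$ for $f_1$, at vertex $7$ for $f_2$ and $f_4$, at vertex $4$ for $f_3$, and at vertex $2$ for $f_5$ --- so an irreducible morphism $M \to N$ cannot be an epimorphism and is therefore automatically an inclusion. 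Thus the lemma reduces to exhibiting, for each pair, an arrow $M \to N$ in the Auslander--Reiten quiver.

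To find these arrows I would knit the Auslander--Reiten quiver of $A$ from the indecomposable projectives $P_1,\dots,P_7$, propagating with the Auslander--Reiten translate $\tau$. Since $A$ is hereditary, on dimension vectors $\tau$ is computed by the Coxeter transformation $\Phi$ of $Q$ (applied to the dimension vector of a non-projective module), and each almost split sequence $0 \to \tau N \to E \to N \to 0$ is exact, so $\underline{\dim} E = \underline{\dim}\tau N + \underline{\dim} N$. The irreducible morphisms with target $N$ are exactly the components of the minimal right almost split map $E \to N$; hence to certify that $f_j\colon M \to N$ is irreducible it suffices to compute $\tau N$ via $\Phi(\underline{\dim} N)$, form $\underline{\dim} E$, decompose it into positive roots, and check that $\underline{\dim} M$ appears as a summand. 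When more convenient one uses instead the almost split sequence starting at $M$ and checks that $N$ is a summand of its middle term. The boundary cases are even easier: irreducible maps into a projective $P$ are the inclusions of the indecomposable summands of $\rad P$, and irreducible maps out of an injective $I$ are the projections onto the indecomposable summands of $I/\soc I$.

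Carrying this out for each pair, I would (i) confirm that both dimension vectors are positive roots of $E_7$, so the corresponding indecomposables exist and are unique; (ii) decide which of $M$, $N$ is non-projective or non-injective; (iii) compute the relevant $\tau$-translate and middle term (or the relevant radical/socle quotient in a boundary case); and (iv) read off $M$, respectively $N$, as the required indecomposable summand. Together with the mono/epi reduction of the first paragraph, this shows that each $f_j$ is an irreducible inclusion.

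The principal obstacle is bookkeeping rather than ideas: the Auslander--Reiten quiver of $E_7$ has $63$ vertices, and both computing the required $\tau$-translates and decomposing the middle terms $\underline{\dim} E$ into positive roots are lengthy, if entirely routine, finite computations. Because only five specific arrows are needed, the work localizes --- typically a single Coxeter step per pair, followed by a short combinatorial check --- which is why the verification can be safely left to the reader or confirmed by machine as in the appendix.
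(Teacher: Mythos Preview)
Your proposal is correct and follows essentially the same route as the paper: both arguments reduce the lemma to locating five arrows in the Auslander--Reiten quiver by computing the relevant almost split sequences, decomposing the middle terms, and then invoking the mono/epi dichotomy for irreducible maps together with the dimension-vector comparison to conclude that each map is an inclusion. The only difference is one of execution --- you outline the hand computation via the Coxeter transformation, while the paper delegates the same calculation to \texttt{QPA}.
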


We can now prove our main result:
\begin{theorem}
Let $Q$ be the quiver of Dynkin type $E_7$ as above and $A=KQ$ the path algebra of $Q$. 
Then $A$ has no slope function of the form $\mu=\frac{\theta}{\dim}$ that defines a total stability condition.
\end{theorem}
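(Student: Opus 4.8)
The plan is to convert the total stability condition into a finite system of strict linear inequalities on the data defining $\mu$, and then exhibit an explicit nonnegative relation among these inequalities that is contradictory.

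Since $A=KQ$ is of finite representation type, $K_0(A)$ has $\mathbb{Z}$-basis the classes of the simple modules $S_1,\dots,S_7$, so a linear map $\theta\colon K_0(A)\to\mathbb{R}$ is determined by the scalars $\theta_i:=\theta(S_i)$: for a module with dimension vector $[d_1,\dots,d_7]$ one has $\theta([d_1,\dots,d_7])=\sum_i\theta_i d_i$ while $\dim([d_1,\dots,d_7])=\sum_i d_i$. Suppose, for contradiction, that $\mu=\theta/\dim$ defines a total stability condition. Each $f_j$ in Lemma \ref{lemma} is injective and is not an isomorphism (it is irreducible), so its image is a proper nonzero submodule of its codomain $M_j$ that is isomorphic to the domain $N_j$, hence has the same $\mu$-value. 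Stability of the indecomposable $M_j$ therefore forces $\mu(N_j)<\mu(M_j)$ for $j=1,\dots,5$.

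Next, clear denominators. Identifying the indecomposables with their dimension vectors, the inequality $\mu(N_j)<\mu(M_j)$ is equivalent, after multiplying through by the positive integer $(\dim M_j)(\dim N_j)$, to $L_j(\theta)>0$, where $L_j:=(\dim N_j)\,M_j-(\dim M_j)\,N_j\in\mathbb{Z}^7$ is regarded as a linear form in $\theta_1,\dots,\theta_7$ via the standard inner product. A direct computation from the five pairs of dimension vectors in Lemma \ref{lemma} gives
\[
L_1=[4,4,-2,-2,-2,4,-6],\qquad L_2=[0,-4,-1,-1,3,0,3],\qquad L_3=[0,0,-1,1,0,0,0],
\]
\[
L_4=[0,-3,-1,2,0,0,2],\qquad L_5=[-2,5,5,-2,-2,-2,-2].
\]
The crux is then the identity of linear forms
\[
L_1+2L_2+4L_3+2L_4+2L_5=0,
\]
which one verifies coordinate by coordinate. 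Evaluating the left-hand side on $\theta$ and using $L_j(\theta)>0$ for each $j$ yields $0>0$, a contradiction; hence no slope function of the form $\mu=\theta/\dim$ defines a total stability condition for $A$.

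I expect the only genuine obstacle to be the selection hidden inside Lemma \ref{lemma}: one must locate inclusions between indecomposables whose associated forms $L_j$ admit a vanishing combination with strictly positive coefficients, which is exactly why these five particular inclusions (with weights $1,2,4,2,2$) are singled out. Once the list is fixed, the remaining work — checking injectivity and properness of the $f_j$, computing the $L_j$, and verifying the seven scalar equations of the displayed identity — is routine.
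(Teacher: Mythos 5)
Your proposal is correct and follows essentially the same route as the paper: it derives the same five strict inequalities from the five inclusions of Lemma \ref{lemma} and certifies infeasibility; your single identity $L_1+2L_2+4L_3+2L_4+2L_5=0$ (which I have checked coordinate by coordinate, along with your computed $L_j$) is exactly twice the positive combination the paper performs stepwise. No gaps.
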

\begin{proof}
Assume there exists a slope function of the form $\mu=\frac{\theta}{\dim}$ that defines a total stability condition. Since $\theta$ is linear we can write $\theta$ applied to a dimension vector $[y_1,y_2,y_3,y_4,y_5,y_6,y_7]$ as follows for some $x_1,...,x_7 \in \mathbb{R}$:
$\theta([y_1,y_2,y_3,y_4,y_5,y_6,y_7])=x_1y_1+x_2y_2+x_3y_3+x_4y_4+x_5y_5
+x_6y_6+x_7y_7$, for all $y_1,...,y_7 \in \mathbb{R}$.

Then by the previous lemma, we have the following inequalities since $\mu$ is assumed to be total:
\begin{enumerate}
\item $\mu([0,0,1,1,1,0,1])<\mu([1,1,2,2,2,1,1])$
\item $\mu([0,1,1,1,0,0,0])<\mu([0,1,2,2,1,0,1])$
\item $\mu([0,0,1,0,0,0,0])<\mu([0,0,1,1,0,0,0])$
\item $\mu([0,1,1,0,0,0,0])<\mu([0,1,2,1,0,0,1])$
\item $\mu([1,1,1,1,1,1,1])<\mu([1,2,2,1,1,1,1])$.

\end{enumerate} 

Using the definition of $\mu$, we see that the five inequalities are equivalent to the following five inequalities:
\begin{enumerate}
\item $0<4x_1+4x_2-2x_3-2x_4-2x_5+4x_6-6x_7$
\item $0<-4x_2-x_3-x_4+3x_5+3x_7$
\item $0<-x_3+x_4$
\item $0<-3x_2-x_3+2x_4+2x_7$
\item $0 < -2x_1 +5x_2 + 5x_3 - 2x_4 -2x_5-2x_6-2x_7$.

\end{enumerate}
We will show now that this system of five inequalties has no solution in the real numbers.
Multiply the inequality (1) by $\frac{1}{2}$ and add the result to the inequality (5) to obtain the following inequalities:
\begin{enumerate}
\item $0<4x_1+4x_2-2x_3-2x_4-2x_5+4x_6-6x_7$
\item $0<-4x_2-x_3-x_4+3x_5+3x_7$
\item $0<-x_3+x_4$
\item $0<-3x_2-x_3+2x_4+2x_7$
\item[(5')] $0 <  7x_2 + 4x_3 - 3x_4 -3x_5-5x_7$.

\end{enumerate}
Now add the inequalities (2) and (4) to (5') so that we obtain the system of five inequalities as follows:
\begin{enumerate}
\item $0<4x_1+4x_2-2x_3-2x_4-2x_5+4x_6-6x_7$
\item $0<-4x_2-x_3-x_4+3x_5+3x_7$
\item $0<-x_3+x_4$
\item $0<-3x_2-x_3+2x_4+2x_7$
\item[(5'')] $0<2x_3-2x_4$

\end{enumerate}

Now clearly there is no solution to (3) and (5'') combined so the original five inequalities also have no solution in the real numbers.
Thus we see that no total function of the form $\mu=\frac{\theta}{\dim}$ can exist that defines a total stability condition.
\end{proof}

\section{Appendix: Proof of the lemma using QPA}
In this appendix we show how to prove lemma \ref{lemma} using the GAP-package \cite{QPA}.
If you have a recent version of GAP installed, QPA is automatically available and can be loaded inside GAP via the command
\begin{verbatim}
LoadPackage("qpa");
\end{verbatim}
We will use the following QPA-program (copy and paste it into QPA) that gives all indecomposable modules for a Dynkin type path algebra $A=KQ$: \newline
\begin{verbatim}
DeclareOperation("IndModDynkin",[IsList]);

InstallMethod(IndModDynkin, "for a representation of a quiver", [IsList],0,function(LIST)

local A,C,n,injA,W,i,WW,l;

A:=LIST[1];
C:=CoxeterMatrix(A);
n:=Order(C);
injA:=IndecInjectiveModules(A);
W:=[];for i in injA do for l in [0..n] do Append(W,[DTr(i,l)]);od;od;
WW:=Filtered(W,x->Dimension(x)>0);
return(WW);

end);
\end{verbatim}
The program uses the fact that for $A=KQ$ of Dynkin type, all indecomposable modules are of the form $\tau^l(I)$ for an indecomposable injective $A$-module $I$ and some $l$ with $0 \leq l \leq n$, where $n$ is the order of the Coxeter matrix of $A$. Some of the modules of the form $\tau^l(I)$ might be zero but the output only gives the non-zero indecomposable modules.
The next command defines the path algebra $A=KQ$ of Dynkin type $E_7$ as in the main text and calculates all indecomposable modules for $A$ (we work over the field $K$ with three elements here, but the representation theory of $A$ and its Auslander-Reiten quiver do not depend on the field):
\begin{verbatim}
K:=GF(3);
Q:=Quiver(["1","2","3","4","5","6","7"], [["2","1","a_1"],["3","2","a_2"] 
,["4","3","a_3"],["5","4","a_4"],["6","5","a_5"],["3","7","a_6"]]);
A:=PathAlgebra(K,Q);L:=IndModDynkin([A]);Size(L);
\end{verbatim}
After entering this into QPA, we see that the list $L$ of indecomposable modules contains 63 indecomposable modules, which is also what the theory predicts.

The indecomposable $A$-module $M_1$ with dimension vector $[1,1,2,2,2,1,1]$ and its almost split sequence  $0 \rightarrow \tau(M_1) \rightarrow X_1 \rightarrow M_1 \rightarrow 0$ can now be obtained as follows:
\begin{verbatim}
M:=L[11];W1:=AlmostSplitSequence(M);g1:=W1[2];X1:=Source(g1);DecomposeModule(X1);
\end{verbatim}
Here $g1$ is the map $X_1 \rightarrow M_1$ in the almost split sequence and the last command decomposes the module $X_1$ into indecomposable modules.
The result is that $X_1$ is the direct sum of the indecomposable modules with dimension vectors  $[ 0, 0, 1, 1, 1, 0, 1 ]$ and $[ 1, 2, 3, 3, 2, 1, 1 ]$.
Thus we see that there is an irreducible map from $[ 0, 0, 1, 1, 1, 0, 1 ]$ to $[1,1,2,2,2,1,1]$ and since every irreducible map is either injective or surjective, this must be an injective map. Thus we verified (1) of lemma \ref{lemma}.
The verification of parts (2) to (5) of lemma \ref{lemma} can be done in exactly the same way and we leave this to the reader.
We remark that the indecomposable module with dimension vector [0,1,2,2,1,0,1] is L[12] in the above list $L$, the indecomposable module with dimension vector [0,0,1,1,0,0,0] is L[39], the indecomposable module with dimension vector [0,1,2,1,0,0,1] is L[43] and the indecomposable module with dimension vector [1,2,2,1,1,1,1] is L[44].
\section*{Acknowledgements} 
Rene Marczinzik gratefully acknowledges funding by the DFG (with project number 428999796). This project profited from the use of the GAP-package \cite{QPA} and \cite{Sage}. The author is thankful to Max Alekseyev for helpful comments related to Sage.


\begin{thebibliography}{Gus}

\bibitem[ARS]{ARS} Auslander, M.; Reiten, I.; Smalo, S.: {\it Representation theory of Artin algebras.} Cambridge Studies in Advanced Mathematics, 36. Cambridge University Press, Cambridge, 1997.
\bibitem[B]{B}  Bridgeland, T.: {\it Stability conditions on triangulated categories.} Ann. of Math. (2) 166 (2007), no. 2, 317-345.
\bibitem[K]{K} King, A.: {\it Moduli of representations of finite-dimensional algebras.} Quart. J. Math. Oxford Ser. (2) 45 (1994), no. 180, 515-530.
\bibitem[Q]{Q} Qui, Y.: {\it Stability conditions and quantum dilogarithm identities for Dynkin quivers.} Adv. Math. 269 (2015), 220-264.
\bibitem[QPA]{QPA} The QPA-team, QPA - Quivers, path algebras and representations - a GAP package, Version 1.25; 2016, \url{https://folk.ntnu.no/oyvinso/QPA/}.
\bibitem[Rei]{Rei} Reineke, M.: {\it The Harder-Narasimhan system in quantum groups and cohomology of quiver moduli.}  Invent. Math. 152 (2003), no. 2, 349-368. 
\bibitem[Rud]{Rud} Rudakov, A. {\it Stability for an abelian category.}
J. Algebra 197 (1997), no. 1, 231-245. 
\bibitem[Sage]{Sage} The Sage Developers. SageMath, the Sage Mathematics Software System (Version 8.1), 2017. \url{http://www.sagemath.org}.
\end{thebibliography}
\end{document}